\documentclass[11pt]{article}     
\usepackage{graphicx}
\usepackage{url}

\usepackage{color}
\definecolor{Red}{rgb}{1,0,0}

\definecolor{maroon}{rgb}{.69,.188,.376}
\def\qed{\hfill{$\blacksquare$}}

\definecolor{darkgreen}{rgb}{0,.5,0}
\definecolor{darkblue}{rgb}{0,0,.5}
\definecolor{magenta}{rgb}{1,0,1}

\usepackage[mathscr]{euscript}		

\usepackage{dsfont}

\usepackage[colorlinks=true]{hyperref}
\usepackage{amsthm}
\usepackage{cleveref}

\hypersetup{pdftex, colorlinks=true, linkcolor=maroon, citecolor=maroon,
  filecolor=blue,urlcolor=blue}

\usepackage{amsmath}

\usepackage{amssymb}
\usepackage{caption} 
\usepackage{anysize}
\usepackage{enumerate}
\usepackage{enumitem}

\usepackage[top=.95in, bottom = 1 in, left=1in, right = .6in]{geometry}

\newcommand{\remove}[1]{}

\def\F{\mathcal{F}}
\def\G{\mathcal{G}}
\def\X{\mathcal{X}}
\def\Y{\mathcal{Y}}
\def\H{\mathcal{H}}
\def\M{\mathcal{M}}

\def\P{\mathbb {P}}
\def\E{\mathbb {E}}
\def\bR{\mathbb{R}}
\newcommand{\EXP}[1]{\mathbb {E}\!\left(#1\right) }
\newcommand{\V}[1]{\mathsf{Var}\left( #1 \right)}
\newcommand{\C}[1]{\mathsf{Cov}\left( #1 \right)}


\newtheorem{theorem}{Theorem}
\newtheorem{corollary}{Corollary}
\newtheorem{lemma}{Lemma}

\newtheorem{remark}{Remark}

\numberwithin{equation}{section}
\numberwithin{theorem}{section}
\numberwithin{lemma}{section}
\numberwithin{corollary}{section}
\numberwithin{definition}{section}
\numberwithin{remark}{section}

\newcommand{\be}{\begin{equation}}
\newcommand{\ee}{\end{equation}}
\newcommand{\no}{\nonumber}

\def\cip{\underset{n \rightarrow \infty}{\overset{p}{\longrightarrow}}}

\def\cid{\underset{n \rightarrow \infty}{\overset{d}{\longrightarrow}}}
\def\1{\mathbf{1}}

\begin{document}
	
	\title{Central limit theorem for statistics of subcritical configuration models.}
	\author{Siva Athreya  \thanks{Research  supported in part by an ISF-UGC Project and a CPDA grant from the Indian Statistical Institute. 
  }  \and 
 D. Yogeshwaran \thanks{Research supported in part by INSPIRE Faculty Award and a CPDA grant from the Indian Statistical Institute.}}
 
               \maketitle
        
	       \begin{abstract}
 We consider subcritical configuration models and show that the
 central limit theorem for any additive statistic holds when the
 statistics satisfies a fourth moment assumption, a variance lower
 bound and the degree sequence of the graph satisfies a growth condition.
If the degree sequence
 is bounded, for well known statistics like component counts,
 log-partition function, and maximum cut-size which are Lipschitz
 under addition of an edge or switchings then the assumptions reduce
 to a linear growth condition for the variance of the statistic.
 Our proof is based on an application of the central limit theorem for
 martingale-difference arrays due to McLeish \cite{McLeish1974} to a
 suitable exploration process. \\
	\end{abstract}

{\em AMS Subject Class [2010] : Primary : 60F05, 05C80 ; Secondary :  60C05} \\ 
{\em Keywords : Configuration model, additive graph statistics, central limit theorem, martingale-difference arrays}


\section{Introduction}

In this short note. we prove a central limit theorem for additive
statistics of random graphs that come from subcritical configuration
models. In a configuration model, we are given a {\em degree sequence}
$\{d^n_i\}_{i=1}^n, n \geq 1$ such that $m_n = \sum_{i=1}^nd^n_i$ is
even. One attaches $d^n_i$ half-edges or stubs to each vertex $i$, $1
\leq i \leq n.$ The random multi-graph formed by pairing uniformly at
random these half-edges or stubs is what is known as {\em the
  configuration model} $G_n := G(n,\{d^n_i\}_{i=1}^n$), \cite[Chapter 7]{Vander2016}. Under the assumption of subcriticality
(i.e. no giant component) and a growth condition on the degree
sequence we prove a central limit theorem for any additive statistic
of the graph having {an appropriate} variance lower bound. See
Section \ref{sec:modeldef} for a precise definition of the model and
assumptions, along with the statement of the main result (see Theorem
\ref{thm:main}). Our results can possibly be extended to a
larger class of random graph models which have similar constructions to
the configuration model (see Remark \ref{rthm:main}).

Graphs on $n$ vertices can be broadly divided into three classes:
Dense graphs, those with number of edges being of order~$n^2$; Sparse
graphs with bounded (average) degree and consequently having order $n$
edges; and in between are those whose average degree grows in the
number of vertices, but only at sub-linear speed. Each class has a
separate limiting theory. In this article we shall focus on a
particular model that falls in the sparse graph regime. One feature in this class of random graphs is the following phase transition. If the
expected degree (to be precise expectation of the size-biased degree
distribution) of a vertex is larger than one, namely the
super-critical phase, then there is a largest connected component
referred to as the ``giant'' component which contains a positive
proportion of all vertices.  On the other hand, if the expected degree
is smaller than one, namely sub-critical phase, then there is no
``giant'' component and all components are small.

The study of random graphs has a rich history beginning with the
pioneering work of Erd\"{o}s-R\'enyi in 1960's (see
\cite{Bollobas2001,Janson2011}). In recent years, the theory of random
graphs has been significantly expanded by addition of newer models of
random graphs such as the preferential attachment model, configuration
model, inhomogeneous random graphs et al. (see
\cite{Vander2016,Vander2018} for a thorough review of the subject). In
\cite{Janson2008aoap,Vander2018} the weak and strong law has been
established for the giant component of a configuration
model. Recently, a more general strong law result for additive
Lipschitz statistics of configuration model has been shown in
\cite{Salez2016} using the interpolation method. For many of these
models, Strong law or Weak law of large numbers for wide-range of
statistics can often be proven now using local-weak convergence
(\cite{Aldous2004}) or the interpolation method
(\cite{Salez2016}). Additionally, strong law for susceptibility of the
configuration model has been shown via branching process approximation
in \cite{Janson2010}.

However, central limit theorems for statistics of sparse random graphs
especially non Erd\"{o}s-R\'enyi models are harder to find.  Central
limit theorem's for the size of the giant component for
Erd\"{o}s-R\'enyi graphs in the super-critical phase have been studied
in \cite{Pittel1990tree,Barraez2000,Nachmias2007component,Bollobas2012asymptotic}.
Similar limit theorems for $k$-core in the super-critical phase and
susceptibility in the sub-critical phase have been studied in
\cite{JansonLuczak2008aoap} and \cite{Janson2008jmp}
respectively. Asymptotic normality for subtree counts in the sparse
regime (and subgraph counts in the other regimes) has been shown in
\cite{Rucinski1988small} and extensions of the same to functional
limit theorems has been shown in \cite{Janson1990functional}. Also,
alternate proofs of the same via cumulant method and discrete
Malliavin-Stein method can be found in \cite{Feray2016} and
\cite{Krokowski17} respectively.

As for the configuration model: a large deviation result for the
empirical neighbourhood distribution was shown in \cite{Bordenave2015}
using the framework of local weak convergence; a central limit theorem
for self-loops and multiple edges in configuration model was shown in
\cite{Angel2016limit} using the Chen-Stein method for the Poisson
approximation; in \cite{BallNeal17}, asymptotic variance of the giant
component of the configuration model was determined under appropriate
conditions on the degree sequence; and more recently in
\cite{Barbour2017}, this was extended to a central limit theorem as a
consequence of a more general normal approximation result derived for
local statistics of the configuration model using the Stein's method.

Recently, after the first version of this article appeared on the arxiv, in \cite{Janson2018} expectation and variance asymptotics as well as a central limit theorem for various statistics have been established using moment methods. Further, in \cite{Janson2019}, sufficient conditions were given to extend central limit theorems for statistics of configuration model to those of the configuration model conditioned on being a simple graph.

As mentioned earlier, strong law of large numbers for additive
Lipschitz functions with some additional assumptions has been shown in
\cite{Salez2016}. In this article we consider additive statistics in
the sub-critical configuration model and formulate broad assumptions
that need to be verified for a central limit theorem to hold. The
assumptions, apart from subcriticality include a decay rate for the
size of the largest component along with moment bounds and variance
lower bounds for the statistic. We first provide a construction via
edge exploration of { the configuration model} $G_n
:=G(n,\{d^n_i\}_{i=1}^n)$ for a specified degree sequence and prove a
central limit theorem in Theorem \ref{thm:main}.  The exploration can
be used to generate a martingale array sequence from the statistic for
which we verify McLeish's martingale-difference array central limit
theorem, \cite{McLeish1974}. {As an explicit application of our general central limit theorem, we show central limit theorem for number of components isomorphic to a finite tree in a sub-critical configuration with bounded degree sequence (see Remark 
\ref{rem:applns}(1)). We use the variance asymptotics in the recent preprint of \cite{Janson2018} to verify the non-trivial variance lower bound. We also use the results of \cite{Janson2019} to extend some of our central limit theorems to the configuration model conditioned on being a simple graph (see \ref{rem:applns}(4))}. We believe that our generic central limit theorem complements those of \cite{Barbour2017} and \cite{Janson2018}.

The rest of the paper is organized as follows : In Section
\ref{sec:modeldef}, we state our model precisely as well as state our central limit theorem. In Section
\ref{sec:Applns}, we discuss some examples and applications of our
result. We conclude with the proof of our main result in Section \ref{sec:proofs}.

\section{Model and Main Result}  
\label{sec:modeldef}

We begin by defining the statistic of interest. Let
$${\mathcal G} = \{ G : G \mbox{ is a finite multi-graph} \} $$
denote the collection of all finite multi-graphs. Our statistic is defined as a function $F : \mathcal{G} \to \bR$ such that $F$ is invariant under {\em graph isomorphism} (i.e., $F(G_1) = F(G_2)$ if $G_1 \cong G_2$) and {\em additive} (i.e., $F(G) = F(G_1) +F(G_2)$ if $G$ is a vertex-disjoint union of $G_1$ and $G_2$).

We shall construct a random (multi-)graph $G_n:= G(n,
\{d^n_i\}_{i=1}^n)$ on the vertex set $V_n = [n]=\{1,\ldots,n\}$  with
a specified degree sequence $d^n_1,\ldots,d^n_n$ as follows. Assume
that $ \sum_{i=1}^n d^n_i$ is even and we shall denote $ 2m_n =  \sum_{i=1}^n d^n_i$. We shall denote $d^n_{max}
:= \max_{1 \leq i \leq n}d^n_i$. We shall follow the standard
construction via half-edges but with a breadth-first exploration and
hence we shall describe the same in detail. Also, for convenience of
reading we will refer to $d_i^n$ as $d_i$ without the superscript $n$.

Let $W(i) = \{(i,1),\ldots,(i,d_i)\}$ be the set of ordered half-edges
incident on $i \in [n]$ and $\H_n := \cup_{i=1}^nW(i)$ be the total
collection of half-edges on all the vertices in $[n]$. The half-edges
are ordered as per the lexicographic order i.e., $(i,k) \leq (j,l)$ if
$i \leq j$ or $i = j$ and $k \leq l$. For all $t \in [m_n]$,
$\H_n$ is partitioned into three sets $A_t,C_t,U_t$ respectively the
set of {\em active, connected} and {\em unexplored half-edges} with
the initial configuration being $A_1 = W(1), C_1 = \emptyset, U_1 =
\H_n \setminus W(1), X_{1,n} = \emptyset.$ Also our exploration shall
ensure that both of the following events cannot happen for any $k \in
[n]$ : $W(k) \cap A_t \neq \emptyset, U_t \cap W(k) \neq \emptyset$.

{\bf Edge Exploration:} The exploration algorithm can be described as
follows : At step $t \in [m_n]$, choose the smallest edge
$(v_t,i_t)$ (w.r.t. lexicographic order) in $A_t$ and pair it
uniformly at random with one of the other half-edges in $A_t \cup
U_t$, say $(j_t,l_t)$. Then if:
$$
\begin{array}{ll}
  \mbox{$(j_t,l_t) \in A_t$,}& \mbox{set $A_{t+1} = A_t
    - \{(v_t,i_t),(j_t,l_t)\}$ and $U_{t+1} = U_t$;}\\
  &\\
  \mbox{$(j_t,l_t) \notin A_t$,}& \mbox{set $A_{t+1} = A_t \cup W(j_t) - \{(v_t,i_t),(j_t,l_t)\}$ and $U_{t+1} = U_t \setminus W(j_t-1)$.}
\end{array}$$

{Set of connected half-edges at time $t+1$ to be $C_{t+1} = C_t \cup \{(v_t,i_t),(j_t,l_t)\}$ and the newly formed edge is denoted by $X_{t,n} := [(v_t,i_t),(j_t,l_t)].$ Now, repeat the algorithm until $A_{t+1} =  U_{t+1} = \emptyset.$ Since we are pairing two half-edges at every time-step, the algorithm will stop at $t = m_n$.

Denoting the set of matchings on $\H_n$ by $\M_n$, we have generated a
sequence of random elements of $2^{\M_n}$ - $X_{1,n},\ldots,X_{m_n,n}.$ 
Given the sequence $X_{1,n},\ldots,X_{m_n,n}$, we construct the
(multi-)graph $G_n$ by placing an edge between $i,j \in [n]$ for every
pairing of half-edges $[(i,h),(j,l)] \in \X_n :=
(\cup_{k=1}^{m_n}X_{k,n})$. Thus $G_n \in \sigma( \{X_{1,n}, X_{2,n},
\ldots, X_{m_n,n}\})$. Further, we define
$\F_{k,n} = \sigma(\{X_{1,n}, X_{2,n}, \ldots, X_{k,n}\}).$\\

Let $F_n \equiv F(G_n)$ and for $1\leq k \leq m_n$, let $ \Delta_{k,n}
= \E(F_n \mid {\mathcal F}_{k,n}) - \E(F_n \mid {\mathcal F}_{k-1,n})$
with $$ C_n := \sup_{1 \leq k \leq m_n} \E(\Delta_{k,n}^4).$$

We shall make the following assumptions: Let $C^n_{\max}$ denote the
largest connected component in $G_n$. For some $\kappa \geq 0$ and a
sequence $\alpha_n \geq 1$
 \[ \mbox{$C_n(d^n_{max})^2\alpha_n = o(n^{2\kappa})$ and  
$C_n(d^n_{max})^2\P( \mid C^{n}_{\max} \mid > \alpha_n) = o(n^{2\kappa-1}).$} \hspace{1in} (\mbox{G1})\] 
\[  \V{F_n} = \Omega(n^{\frac{1}{2} + \kappa}). \hspace{3in} (\mbox{F1})\]
Now, we are ready to state our main result
\begin{theorem} \label{thm:main} Assume (\mbox{G1}) and (\mbox{F1}).  Then
  \begin{equation}
\frac{F_n - \E(F_n)}{\sqrt{\V{F_n}}} \cid  Z,
  \end{equation}
  with $Z$ being a standard Normal random variable.
\end{theorem}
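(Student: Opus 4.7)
The plan is to apply McLeish's martingale-difference array CLT \cite{McLeish1974} to the normalized Doob increments $\xi_{k,n} := \Delta_{k,n}/\sqrt{V_n}$, where $V_n := \V{F_n}$. By construction $\sum_{k=1}^{m_n}\Delta_{k,n} = F_n - \E(F_n)$, and orthogonality of martingale differences gives $\sum_{k=1}^{m_n}\E(\Delta_{k,n}^2) = V_n$, so the normalization is the correct one. McLeish's theorem then reduces the CLT to verifying (i) $\max_{k \le m_n}|\xi_{k,n}| \cip 0$ together with the uniform bound $\E(\max_k \xi_{k,n}^2) = O(1)$, and (ii) $\sum_{k=1}^{m_n}\xi_{k,n}^2 \cip 1$.

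Condition (i) should follow directly from the fourth-moment control: by Markov and a union bound,
$$\P\!\Big(\max_{k\le m_n}|\Delta_{k,n}|>\epsilon\sqrt{V_n}\Big)\;\le\; \frac{m_n\,C_n}{\epsilon^4\,V_n^2},$$
and combining $m_n \le \tfrac{1}{2}n\,d^n_{\max}$ with (F1) and the first clause of (G1) (which, since $\alpha_n\ge 1$, already forces $d^n_{\max}\,C_n = o(n^{2\kappa})$) drives this bound to zero. The uniform maximum condition is immediate from $\E(\max_k \xi_{k,n}^2) \le \sum_k \E(\xi_{k,n}^2) = 1$.

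Condition (ii) is the crux of the proof. Since $\E[\sum_k \Delta_{k,n}^2] = V_n$, Chebyshev reduces the task to showing $\V{\sum_k \Delta_{k,n}^2} = o(V_n^2)$. The diagonal contribution is easy: $\sum_k \V{\Delta_{k,n}^2} \le m_n C_n = o(V_n^2)$ by the same accounting as in (i). The core is the off-diagonal covariances $\C{\Delta_{j,n}^2,\Delta_{k,n}^2}$. The plan here is to exploit two structural features. First, by additivity and isomorphism-invariance of $F$, once the component being explored at time $k$ has been fully uncovered its contribution to $F_n$ becomes $\F_{k,n}$-measurable, so $\Delta_{k,n}$ only reflects information about the component currently being built (and the as-yet-unexplored residual). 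Second, by (G1), with high probability every component is completely explored within $O(\alpha_n d^n_{\max})$ edge-steps, after which the edge-exploration continues as a conditionally independent configuration model on the remaining half-edges. Consequently $\Delta_{j,n}$ and $\Delta_{k,n}$ decouple whenever the steps $j$ and $k$ lie in distinct components. On the event $\{|C^n_{\max}| \le \alpha_n\}$ the number of ``coupled'' pairs $(j,k)$ (those sharing a common component) is at most $O(m_n\alpha_n d^n_{\max})$, and applying the Cauchy--Schwarz bound $|\C{\Delta_{j,n}^2,\Delta_{k,n}^2}|\le C_n$ to each such pair contributes $O(m_n\alpha_n d^n_{\max}\,C_n)=o(V_n^2)$ by the first clause of (G1). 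The complementary low-probability event $\{|C^n_{\max}|>\alpha_n\}$, where the decoupling need not hold, is absorbed by a conditional Cauchy--Schwarz together with the second clause of (G1), also at order $o(V_n^2)$.

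The main obstacle is making the cross-component decoupling rigorous: one must formalize that, conditional on $\F_{k,n}$, the residual pairings are themselves (in distribution) a configuration model on the unexplored half-edges, so that the dependence of $\E(\Delta_{k,n}^2 \mid \F_{j,n})$ on the full history $\F_{j,n}$ reduces to a dependence on a low-complexity summary statistic (essentially the residual degree sequence and the size of the currently active component). The two clauses of (G1) are calibrated precisely so that this ``typical'' contribution and the ``exceptional'' remainder each fall into $o(V_n^2)$; once (i) and (ii) are in place, McLeish's theorem delivers the stated convergence $(F_n-\E(F_n))/\sqrt{V_n} \cid Z$.
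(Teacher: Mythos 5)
Your high-level plan is the same as the paper's: apply McLeish's martingale-difference CLT, verify the uniform $L^2$ bound on the maximum, the negligibility of the maximum via a fourth-moment union bound, and then tackle $\sum_k D_{k,n}^2 \cip 1$ as the substantial step. Your verifications of (i) match the paper's (mc1) and (mc2).

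For the conditional-variance step there is a genuine gap in how you try to control the off-diagonal covariances. You propose to bound $\V{\sum_k \Delta_{k,n}^2}$ by separating pairs $(j,k)$ into those that ``share a component'' (at most $O(m_n \alpha_n d^n_{\max})$ on the event $\{|C^n_{\max}| \le \alpha_n\}$) and the rest, and then applying Cauchy--Schwarz to the coupled pairs and a second estimate to the exceptional event. The difficulty is that $\C{\Delta_{j,n}^2,\Delta_{k,n}^2}$ is a deterministic number while ``$j$ and $k$ lie in the same component'' and ``$|C^n_{\max}|\le\alpha_n$'' are random events; a covariance does not localize to a good event the way a sum of random variables does, so the split you describe does not literally produce a bound. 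You flag this yourself as ``the main obstacle,'' but you do not supply the device needed to close it.

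The paper's resolution is to introduce the truncation \emph{before} taking variances. For each $k$ define the (deterministically indexed) event $E_{n,k} = \{A_t = \emptyset \text{ for some } t \in [k - 2 d^n_{\max}\alpha_n, k]\}$ and set $W_{k,n} = D_{k,n}^2 \1_{E_{n,k}}$, $Z_{k,n} = D_{k,n}^2 - W_{k,n}$. By the independence property of the exploration (once the active set empties, the remaining pairings form a conditionally independent configuration model) together with additivity and isomorphism-invariance of $F$, the truncated $W_{k,n}$ is measurable with respect to $\sigma\{X_{t,n}: t \in [k - 2 d^n_{\max}\alpha_n, k]\}$, so the family $\{W_{k,n}\}$ is genuinely $m$-dependent with a \emph{deterministic} band of width $2 d^n_{\max}\alpha_n$. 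This is what turns your heuristic ``count coupled pairs'' into a valid variance bound: $\C{W_{j,n},W_{k,n}} = 0$ exactly whenever $|j-k| > 2 d^n_{\max}\alpha_n$, and Cauchy--Schwarz handles the $O(m_n d^n_{\max}\alpha_n)$ pairs inside the band using $\E[W_{k,n}^2]\le \EXP{\Delta_{k,n}^4}/\V{F_n}^2 \le C_n/\V{F_n}^2$. The error terms $Z_{k,n}$ are nonnegative and vanish on $E_{n,k}$, and $(E_{n,k})^c$ forces a component of size exceeding $\alpha_n$, so $\sum_k \E[Z_{k,n}]$ is bounded via the second clause of (G1). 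This yields $L^1$-convergence of $\sum_k D_{k,n}^2$ to $1$ directly, sidestepping the need to control $\V{\sum_k \Delta_{k,n}^2}$ for the untruncated increments. In short: your decoupling idea and your reading of how the two clauses of (G1) are calibrated are correct, but the argument needs the truncate-then-$m$-dependence step to be rigorous rather than the event-conditional covariance split you sketch.
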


We note that in view of the assumptions in Theorem \ref{thm:main}, we
can get the following bounds on variance using \eqref{e:var} and
definition of $C_n$ as well as (G1) : There exists a constant $M$ such
that for any $\epsilon > 0$ and large enough $n$,
 \begin{equation}
 \label{e:varub1}
 \sqrt{nC_n\alpha_n}d^n_{max} \leq \epsilon n^{1/2+\kappa} \leq \epsilon M \V{F_n} \leq \epsilon M m_n\sqrt{C_n} \leq nd^n_{max}\sqrt{C_n}.
 \end{equation}
 Thus, we can conclude that for our bounds to hold $\alpha_n = o(n)$
 i.e., the configuration model has to be necessarily sub-critical.

\remove{ \item[(b)]{\bf Vertex Exploration:} Let $Y_{1,n} = \emptyset$. Set $v_1 = 1$. At
step $t \in [n]$, enumerate the half edges in $W(v_t) \cap A_t$ as per
their order as $A'_t := (v_t,1),\ldots,(v_t,d'_t)$. We shall
successively pair the half-edges in $A'_t$ randomly with the remaining
edges in $A'_t \cup U_t$. We add all such pairings to $Y_{t,n}$ as
well as add the corresponding half-edges to $C_t$. If the half-edge is
selected from $U_t$, then we add the corresponding vertex and all it
half-edges to $A_t$. The vertex is labeled as the next vertex. We
shall now describe the procedure more precisely.  \\
 
Let $S^0_t = A_t \cup U_t, U^0_t = U_t, C^0_t = C_t, Y^0_{t,n} =
\emptyset$. At sub-step $i$, if $A'_t \cap S^i_t \neq \emptyset$
select the first unpaired half edge in $A'_t \cap S^{i-1}_t$ - say
$(v_t,i')$. Pair $(v_t,i')$ randomly with an half-edge $(j,l) \in
S^{i-1}_t \setminus \{(v_t,i')\}$.  Then, set $S^i_t = S^{i-1}_t
\setminus \{(v_t,i'), (j,l)\}, C^i_t = C_t \cup \{(v_t,i'),
(j,l)\}$. Also, add the pair $[(v_t,i'), (j,l)]$ to $Y^{i-1}_{t,n}$
i.e., set $Y^i_{t,n} = Y^{i-1}_{t,n} \cup [(v_t,i'), (j,l)]$. Suppose
$(j,l) \in U^{i-1}_t$. Then set $U^i_t = U^{i-1}_t \setminus W(j), m =
\sum_{k=1}^n \1[ W(k) \cap U^{i-1}_t = \emptyset]$ and $v_{m+1} =
j$. Observe that by definition $W(v_k) \cap (A_t \cup C^{i-1}_t) \neq
\emptyset$ if and only if $k \leq m$. If $A'_t \cap S^i_t = \emptyset$, then step
$t$ is complete and set $U_{t+1} = U^i_t, C_{t+1} = C^i_t, A_{t+1} =
\H_n \setminus (C_{t+1} \cup U_{t+1})$. Further, set $Y_{t,n} =
Y^{i-1}_{t,n}$. \\

Given the sequence $Y_{1,n},\ldots,Y_{n,n}$, we construct the
(multi-)graph $\tilde{G}_n$ by placing an edge between $i,j \in [n]$
for every pairing of half-edges $[(i,h),(j,l)] \in \Y_n :=
(\cup_{k=1}^nY_{k,n})$. Thus $\tilde{G}_n \in \sigma( \{Y_{1,n},
Y_{2,n}, \ldots, Y_{n,n}\})$.   Further, define
$\G_{k,n} = \sigma( \{Y_{1,n}, Y_{2,n}, \ldots, Y_{k,n}\}).$ Note that
these $\sigma$-algebras are different from ${\F}_{k,n}$ in the edge exploration above.

Let $\tilde{F}_n \equiv F(\tilde{G}_n)$  and for $1\leq k \leq n$, let $
\tilde{\Delta}_{k,n} = \E(\tilde{F}_n \mid {\mathcal G}_{k,n}) - \E(\tilde{F}_n \mid 
      {\mathcal G}_{k-1,n})$ with $$ \tilde{C}_n := \sup_{1 \leq k  \leq n} \E(\tilde{\Delta}_{k,n}^4).$$

Let $\tilde{C}^n_{\max}$ denote the  largest connected component in $\tilde{G}_n$. For some $\kappa \geq 0$ and a sequence $\alpha_n \geq 1$ 
   \[ \mbox{$\tilde{C}_n\alpha_n = o(n^{2\kappa})$ and  
$\tilde{C}_n\P( \mid \tilde{C}^{n}_{\max} \mid > \alpha_n) = o(n^{2\kappa-1}).$} \hspace{1in} (\tilde{ \mbox{G1}}) \]
\[ \V{\tilde{F}_n} = \Omega(n^{\frac{1}{2} + \kappa}). \hspace{3in} (\tilde{\mbox{F1}})\]
Our second formulation of the main result is:
\begin{theorem} \label{thm:main2} Assume ($\tilde{\mbox{G1}}$) and ($\tilde{\mbox{F1}}$).  Then
  \begin{equation}
\frac{\tilde{F}_n - \E(\tilde{F}_n)}{\sqrt{\V{\tilde{F}_n}}} \cid  Z,
  \end{equation}
  with $Z$ being a standard Normal random variable.
\end{theorem}
\end{enumerate}
}

 The key tool in the proof of Theorem \ref{thm:main} is McLeish's martingale-difference array central limit theorem, \cite{McLeish1974}. Our inspiration for
 this central limit theorem arose from powerful central limit theorems proven for geometric functionals of Poisson and Bernoulli point processes proved in \cite{Penrose2001aop,Penrose2001aoap}. The advantage with the martingale-difference array central limit theorem is that it reduces the proof of central limit theorem to moment bounds and convergence in probability of the squared martingale-differences. In \cite{Penrose2001aop,Penrose2001aoap}, the latter is achieved by applying ergodic theorem to appropriate ergodic random fields constructed from the functional and the Poisson point process. However, in our case, the model does not have any underlying ergodicity or stationarity and so, to achieve the required $L_1$ convergence of squared martingale-differences, we use the `sub-criticality' of the configuration model and additivity of the functional. We shall comment later in Section \ref{sec:Applns} about verifying the assumptions on the degree sequence and the function $F$. We also remark about possible extensions of our main theorems in Remarks \ref{rem:applns} and \ref{rthm:main}.

 Our proof techniques require us to restrict to sub-critical
 configuration models (i.e., no giant component). {This is in contrast
 to the results in \cite{Barbour2017} and the very recent one of \cite{Janson2018} which apply to the
 super-critical regime as well. However, our mild assumptions on the locality of the statistics as well as that of degree sequence are less restrictive in some applications. For example, the results of \cite{Janson2018} apply only to subtree counts in the sub-critical regime. Also, as mentioned before, the three proof techniques are different. We use martingale-difference array central limit theorem, \cite{Barbour2017} uses Stein's method via Stein couplings and \cite{Janson2018} uses the classical moment method. 
 
We emphasize that the variance lower bound condition shall usually be the most non-trivial condition in this article to verify and we use the recent variance asymptotics of \cite{Janson2018} to show the same in particular examples (see Remark \ref{rem:applns}(1)). However, we would like to mention that similar variance lower bound conditions appear in most general central limit theorems such as those in \cite{Feray2016,Barbour2017}. Usually these variance lower bound conditions are verified in a case-specific manner like in \cite{BallNeal17}. Only in the recent pre-print of \cite{Janson2018}, one can find somewhat general variance asymptotic formulas for sub-tree counts and certain statistics in the super-critical regime that do not depend on the giant component (\cite[Theorems 3.2 and 3.17]{Janson2018}).

To emphasize the non-triviality of the variance lower bound, we would like to mention that the cardinality of a maximum independent set is a Lipschitz functional and satisfies strong law (see \cite{Salez2016}) but it is much tightly concentrated on a random $d$-regular graph (see
 \cite{Ding2016maximum}). Also, the number of multiple edges as well as self-loops are Lipschitz statistics and with variance growing polynomially in $n$ but the variance growth isn't sufficient to verify the assumptions of our theorems. However, a central limit theorem for the same has been shown in \cite[Section 1.3]{Angel2016limit}. Thus, our general central limit theorem can be considered as reducing the task of proving a central limit theorem for many statistics of random graphs to that of proving `reasonable' variance lower bounds.}

\section{Applications}
\label{sec:Applns}

We will prepare for our applications by recalling a couple of lemmas from the literature.
\begin{lemma}(\cite[Theorem 1.1]{Janson2008aoap})
\label{l:G1}
  Let the $D_n$ be the degree of a randomly chosen vertex in $G(n, \{d_i^n\}_{i=1}^n)$. We assume that
  $$ D_n \stackrel{d}{\rightarrow} D, \E[D_n] \rightarrow \E[D] \in (0, \infty), \frac{\E[D_n(D_n-1)]}{\E[D_n]} \rightarrow \frac{\E[D(D-1)]}{\E[D]} \in [0,1) .$$ 
 Also, assume that for some $\gamma > 3$, uniformly in $n,k$
$$ P(D_n \geq  k)  = O(k^{1-\gamma}).$$
    Then, there exists a constant $A$ so that   
$$\P(\mid C^n_{\max} \mid \geq An^{1/(\gamma-1)}) \rightarrow 0 \mbox{ as } n \rightarrow \infty.$$ 
  \end{lemma}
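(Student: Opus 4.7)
The plan is to follow the classical exploration-plus-random-walk approach that underlies most component-size bounds for configuration models. Pick a vertex $v$ uniformly at random and explore its component by successively matching half-edges, maintaining the count $Y_t$ of active half-edges (attached to already-discovered vertices but not yet paired) after $t$ steps. Starting from $Y_0 = D_n(v)$, at each step we pair an active half-edge with a uniform remaining half-edge; if the partner is already active, $Y_t$ drops by $2$, and if the partner lies on a previously unexplored vertex of degree $D'$, then $Y_t$ changes by $D' - 2$. The component of $v$ is exhausted precisely when $Y_t$ first hits $0$, so the task reduces to producing a sufficiently strong tail bound on this hitting time $\tau(v)$.

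The second step is to dominate $Y_t$ above by a random walk $S_t$ with i.i.d.\ increments $\xi$ distributed as $\hat{D}_n - 2$, where $\hat{D}_n$ is the size-biased version of $D_n$ (the degree of the endpoint of a uniformly chosen half-edge). Because $D' - 2 \geq -2$, replacing every step by its ``unexplored'' outcome only increases the walk, so $Y_t \leq S_t$ pointwise up to a coupling error that is negligible as long as fewer than $\varepsilon m_n$ half-edges have been paired. A short calculation gives
\begin{equation*}
\E[\xi] \;=\; \frac{\E[D_n^2]}{\E[D_n]} - 2 \;=\; \frac{\E[D_n(D_n-1)]}{\E[D_n]} - 1 \;\longrightarrow\; \frac{\E[D(D-1)]}{\E[D]} - 1 \;<\; 0,
\end{equation*}
so $S_t$ has a strictly negative drift $-\mu < 0$ for all $n$ large enough.

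The third step is to turn the tail hypothesis $\P(D_n \geq k) = O(k^{1-\gamma})$ into a tail bound on the total progeny. Size-biasing raises the tail exponent by one: $\P(\hat{D}_n \geq k) = O(k^{2-\gamma})$, so the increment $\xi$ has a regularly varying right tail of index $\gamma - 2$. A classical result on the total progeny of a subcritical Galton--Watson tree with such offspring tails, combined with the coupling of the previous step, yields
\begin{equation*}
\P(\tau(v) \geq T) \;=\; O\!\left(T^{2-\gamma}\right)
\end{equation*}
uniformly over the starting vertex $v$, with the constant depending only on $\mu$ and the tail constant.

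The final step is not a plain union bound but a Markov-type observation: if $|C^n_{\max}| \geq T$ then at least $T$ vertices lie in components of size $\geq T$, so
\begin{equation*}
\P(|C^n_{\max}| \geq T) \;\leq\; \frac{1}{T}\sum_{v \in [n]} \P(|C(v)| \geq T) \;=\; O\!\left( \frac{n\, T^{2-\gamma}}{T} \right) \;=\; O\!\left(n\, T^{1-\gamma}\right),
\end{equation*}
which tends to $0$ once $T = A n^{1/(\gamma-1)}$ with $A$ chosen sufficiently large. The principal obstacle is the third step: making the subcritical Galton--Watson total-progeny tail bound rigorous for the configuration-model exploration rather than for a true branching process. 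This is typically handled by truncating the exploration at $\varepsilon n$ half-edges, coupling this window with an honest Galton--Watson tree, and separately ruling out---via a first-moment argument exploiting that the limiting size-biased mean is strictly less than one---the existence of any component carrying more than $\varepsilon n$ half-edges.
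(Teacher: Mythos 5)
The paper gives no proof of this lemma; it is quoted verbatim from \cite[Theorem~1.1]{Janson2008aoap}, so there is no internal argument to compare against, only Janson's original proof. Your exploration-and-domination setup (Steps 1--3) is the standard machinery, and the arithmetic there is fine: the drift $\frac{\E[D(D-1)]}{\E[D]} - 1 < 0$ is correct, the size-biased tail $\P(\hat D_n \geq k) = O(k^{2-\gamma})$ is correct, and the total-progeny tail $O(T^{2-\gamma})$ for a subcritical branching tree with offspring tail index $\gamma - 2$ is the right order (one-big-jump / Otter--Dwass). The issue is Step 4, where the conclusion you draw from your bound is not what the bound actually gives. Plugging $T = A n^{1/(\gamma-1)}$ into $O(nT^{1-\gamma})$ yields
\begin{equation*}
 n\bigl(A n^{1/(\gamma-1)}\bigr)^{1-\gamma} \;=\; A^{1-\gamma}\, n \cdot n^{-1} \;=\; A^{1-\gamma},
\end{equation*}
which is a positive constant independent of $n$. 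It does \emph{not} tend to $0$ as $n \to \infty$; making $A$ large only makes the constant small. In other words, your argument proves tightness of $|C^n_{\max}|/n^{1/(\gamma-1)}$ (for every $\epsilon>0$ there is an $A$ with $\limsup_n \P(|C^n_{\max}| \geq A n^{1/(\gamma-1)}) \leq \epsilon$), which is strictly weaker than the claimed ``there exists a fixed $A$ such that the probability tends to $0$.'' The stronger form is essential for the way the paper uses this lemma: condition (G1) and display \eqref{conf:comp} require $\P(|C^n_{\max}| > A n^{1/(\gamma-1)}) = o(1)$.

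The gap cannot be patched by sharpening the constants in the first-moment bound: a worst-case tail $\P(|C(v)|\geq T) = O(T^{2-\gamma})$ applied to all $n$ vertices inevitably yields $O(nT^{1-\gamma})$, which is exactly order-one at $T\asymp n^{1/(\gamma-1)}$. Janson's actual proof does something qualitatively different. He proves a two-sided concentration statement, $|C^n_{\max}| / d^n_{\max} \overset{p}{\to} 1/(1-\nu)$ with $\nu := \E[D(D-1)]/\E[D]$, and then combines it with the deterministic bound $d^n_{\max} = O(n^{1/(\gamma-1)})$ forced by $n\,\P(D_n \geq k) < 1$. Since $|C^n_{\max}|$ concentrates on a scale $\Theta(d^n_{\max}) = O(n^{1/(\gamma-1)})$, any fixed $A$ larger than the implicit constants gives $\P(|C^n_{\max}| \geq A n^{1/(\gamma-1)}) \to 0$. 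That concentration around the largest-degree vertex's component, rather than a one-sided tail bound summed over all vertices, is the missing ingredient.
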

\medskip

Recall that $\M_n$ is the set of matching on $\H_n$. Consider two
matchings $m,m' \in \M_n$. We say $m,m'$ differ by a {\em switching}
and denote it by $m \sim m'$ if there exists $i_1,i_2,i_3,i_4 \in
\H_n$ such that $(i_1,i_2), (i_3,i_4) \in m$ and $m' = m -
\{(i_1,i_2), (i_3,i_4) \} + \{(i_1,i_3), (i_2,i_4)\}$. Since
$(i_1,i_2)$ corresponds to a pairing of half-edges, a switching is
really a switching of a two pairs of half-edges. Given a matching $m
\in \M_n$, we denote the (multi)-graph obtained by pairing of
half-edges matched in $m$ as $G(m)$and we abbreviate $F(G(m))$ by
$F(m)$.
\begin{lemma}
\label{l:G12}
If $F$ is $M$-Lipschitz under switchings (i.e., $|F(m) - F(m')| \leq
M$ for $m \sim m'$) for some $M < \infty$ or if $F$ is $M/4$-Lipschitz
under edge-addition (i.e., $|F(G)-F(G+(i,j))| \leq M/4$ for any graph
$G$ on $[n]$ and $1 \leq i \neq j \leq n$) for some $M < \infty$, then
we have that $\sup_{n \geq 1} C_n \leq M^4.$
\end{lemma}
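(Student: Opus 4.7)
The plan is to prove the pointwise bound $|\Delta_{k,n}| \le M$ almost surely, from which $\E(\Delta_{k,n}^4) \le M^4$ and hence $C_n \le M^4$ follow immediately. As a preliminary reduction, a single switching amounts to deleting two edges and inserting two others, so if $F$ is $(M/4)$-Lipschitz under edge-addition then it is $M$-Lipschitz under switchings. Thus it suffices to treat the switching-Lipschitz case with constant $M$.

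Since $\E(F_n\mid\F_{k-1,n})$ is the average of $\E(F_n\mid\F_{k-1,n}, X_{k,n}=x)$ taken over $x$ with respect to the conditional law of $X_{k,n}$ given $\F_{k-1,n}$, one obtains
\begin{equation*}
\Delta_{k,n} \;=\; \sum_{x'} \P(X_{k,n}=x'\mid\F_{k-1,n})\Bigl(\E(F_n\mid\F_{k-1,n},X_{k,n}) - \E(F_n\mid\F_{k-1,n},X_{k,n}=x')\Bigr),
\end{equation*}
so it is enough to prove $\bigl|\E(F_n\mid\F_{k-1,n},X_{k,n}=x) - \E(F_n\mid\F_{k-1,n},X_{k,n}=x')\bigr| \le M$ for any two admissible choices $x,x'$ of $X_{k,n}$.

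The crucial step is a swap coupling. The half-edge $(v_k,i_k)$ paired at step $k$ is determined by $\F_{k-1,n}$, so the two choices have the form $x=\{(v_k,i_k),h\}$ and $x'=\{(v_k,i_k),h'\}$ with $h\neq h'$. Conditional on $\F_{k-1,n}$ and $X_{k,n}=x$, the remainder of $G_n$ is encoded by a uniformly random perfect matching $\mu$ on $\H_n\setminus(C_k\cup\{(v_k,i_k),h\})$. Let $g$ be the partner of $h'$ in $\mu$, and define $\mu'$ by deleting the pair $\{h',g\}$ and inserting $\{h,g\}$. The map $\mu\mapsto\mu'$ is a bijection between the perfect matchings on $\H_n\setminus(C_k\cup\{(v_k,i_k),h\})$ and those on $\H_n\setminus(C_k\cup\{(v_k,i_k),h'\})$, so $\mu'$ is uniform on its target set. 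The two resulting full matchings on $\H_n$ differ by exactly one switching, namely swapping the pairs $\{(v_k,i_k),h\},\{h',g\}$ with $\{(v_k,i_k),h'\},\{h,g\}$, and the switching-Lipschitz hypothesis gives $|F(G(\mu))-F(G(\mu'))|\le M$ deterministically. Averaging over $\mu$ yields the required inequality.

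The main technical point is to confirm that the completion of the matching, conditional on $\F_{k,n}$, is genuinely uniform on the unused half-edges; this is the standard fact that the configuration model's random pairing is uniform on perfect matchings of $\H_n$ and that any revealed sub-matching leaves a uniform matching on the complement, which justifies the distributional identities used in the swap coupling above.
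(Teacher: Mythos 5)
Your proof is correct, and it is a complete, self-contained version of the argument that the paper itself delegates to the literature: the paper proves only the reduction from edge-addition-Lipschitz to switching-Lipschitz, and for the core bound $\sup_n C_n \le M^4$ it simply cites the proof of Theorem 2.19 in Wormald's survey (equivalently Section 7.1.2 of Bordenave--Caputo). Your swap coupling is exactly that standard argument. The key observations are all in order: $\Delta_{k,n}$ is a conditional centering, so it suffices to bound the oscillation of $\E(F_n\mid\F_{k-1,n},X_{k,n}=x)$ over admissible $x$; conditional on $\F_{k,n}$ the unrevealed part of the pairing is a uniform perfect matching on $\H_n\setminus C_{k+1}$ (this is the standard property of the uniform matching under sequential revelation by a predictable rule); and the map $\mu\mapsto\mu'$ obtained by reassigning the partner of $h'$ to $h$ is a bijection between the two conditional sample spaces that changes the full matching by a single switching, whence the pointwise bound $|\Delta_{k,n}|\le M$ and hence $\E(\Delta_{k,n}^4)\le M^4$ uniformly in $k$ and $n$.

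The only thing worth flagging is a matter of exposition rather than correctness: you should make explicit that the bijection preserves uniformity precisely because it is a bijection between the two finite sets of perfect matchings of equal cardinality, so pushing forward the uniform law on one set gives the uniform law on the other. You gesture at this (``$\mu'$ is uniform on its target set''), and it is indeed immediate, but since this is the step that turns the deterministic switching bound into a bound on the difference of conditional expectations, it deserves one sentence of justification. With that, your write-up is a clean expansion of what the paper leaves to the cited references.
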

\begin{proof}
Let  $m \sim m'$ be as above and let $G = G(m), G' = G(m')$.  Further, set $G_1 = G -  \{(i_1,i_2), (i_3,i_4) \}$. Then $G' = G_1 + \{(i_1,i_3), (i_2,i_4)\}.$ Now note that if $F$ is $M/4$-Lipschitz under edge-addition then we have that,
\[ |F(m) - F(m')| = |F(G) - F(G')| \leq |F(G) - F(G_1)| + |F(G_1) - F(G')| \leq M, \]
i.e., $F$ is $M$-Lipschitz under switchings. Thus it is enough to prove the first part of the Lemma i.e., under the assumption of $M$-Lipschitz under switchings. This is  proven in \cite[Proof of Theorem 2.19]{Wormald1999} or see  \cite[Section 7.1.2]{Bordenave2015}. 
\end{proof}

{\bf Examples of statistics that are Lipschitz under edge-addition:}

{\em We provide a few examples of statistics that are Lipschitz under
  edge-addition. Most of them can be found in \cite[Section 1]{Salez2016}. We present them here for completeness sake. 
\begin{enumerate}
\item {\em Number of connected components :} $F(G) := \beta_0(G)$ is the number of connected components. It is easy to see that this satisfies our assumptions of additivity and is $1$-Lipschitz under edge-addition.
\item {\em Components of fixed size :} Let $G$ be a graph with components $\Gamma_1,\ldots,\Gamma_L$ and $|\Gamma_{\cdot}|$ denotes the size (measured in number of vertices or edges) of the components. For $K,p \in \mathbb{N}$, define
$$F_{p,k}(G) = \sum_{i=1}^L |\Gamma_i|^p \1[ |\Gamma_i| \leq K].$$
For example, $F_{0,K}$ is the number of components of size at most $K$, $F_1(K)$ is the total size of components of size at most $K$. We note that $F_{p,K}$ is $(2K)^p$-Lipschitz. We shall consider an untruncated version (i.e., $K = \infty$) and hence non-Lipschitz later in Corollary \ref{cor:suscep}.
\item {\em Maximum cut-size :}    A cut $(S,T)$ is a partition of the vertex set into two disjoint sets(i.e. vertex set is the disjoint union of  $S$ and $T$). The size $c(S,T)$ of a cut $(S,T)$ is the number of edges between $S$ and $T$. Define $F(G) = \max\{ c(S,T): (S,T) \mbox{ is a cut of } G\}.$ It can be verified that $F$ is 1-Lipschitz under edge-addition.
  
\item {\em Log-partition function :} Let $S$ be a finite set with one map $h : S \to (0,\infty)$  and a second symmetric map $J : S \times S \to (0,\infty).$ Define the statistic
$$ F(G) := \log (\sum_{\sigma \in S^V} w(\sigma))$$ where
$$w(\sigma) := \prod_{v \in V}h(\sigma_v)\prod_{(v,w) \in E}J(\sigma_v,\sigma_w)$$
where we have used the notation that $G = (V,E)$  and $\sigma := \{\sigma_v\}_{v \in V} \in S^V$. It can be easily verified that the above $F$ is additive and $M$-Lipschitz under edge-addition where $M = \max_{s,t \in S}J(s,t).$ 

Statistics such as above arise often in statistical physics where $\sigma$ is said to be the configuration of a system on the graph $G$, $J$ is interpreted to encode pairwise interaction between vertices, $h$ is considered as the external field and the statistics $F$ is called as the {\em log-partition function}. 

Two particular case of special interest that arise from statistical mechanics are : (i) {\em Ising model :}  $S = \{+1,-1\}$ and $J(s,t) = e^{-\beta st}$ for $\beta \geq 0$ and (ii) {\em Potts model :}  $S = \{1,\ldots,q\}$ and $J(s,t) = 1_{s \neq t} + e^{-\beta}1_{s=t}$ for $\beta \geq 0$. Again, in statistical physics terminology, $\beta$ is known as {\em the inverse temperature}.
\end{enumerate}
}

For Lipschitz functions, we now present some corollaries to our main theorem. The first is an easy consequence of Lemma \ref{l:G12} and our main theorem \ref{thm:main}. 
\begin{corollary}
\label{prop:example1} 
Let $\{d^n_i\}_{i=1}^n$ be a degree sequence and $G_n := G(n,\{d^n_i\}_{i=1}^n)$ be the corresponding configuration model. Let $F$ be invariant under graph isomorphisms, additive and  $M$-Lipschitz under switchings. Suppose that for some $\kappa \geq 0$, 
  \begin{enumerate}
    \item[(i)] $d^n_{max} = O(n^{\beta})$ for some $0 \leq \beta \leq \kappa$ 
    \item[(ii)]
\begin{equation} \label{conf:comp}
\P( \mid C^{n}_{\max} \mid > An^{1/{(\gamma-1)}}) = o(n^{2\kappa - 2\beta - 1})
\end{equation}
for some constant $A$ and  $\gamma > (2\kappa -2\beta)^{-1} + 1,$  and
\item[(iii)] $\V{F_n} = \Omega(n^{\frac{1}{2} + \kappa})$.
  \end{enumerate}
  then
  $$\frac{F_n - \E(F_n)}{\sqrt{\V{(F_n)}}} \cid Z.$$ 
\end{corollary}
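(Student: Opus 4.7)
The plan is to deduce Corollary \ref{prop:example1} directly from Theorem \ref{thm:main} by verifying its two hypotheses (G1) and (F1) under the conditions (i)--(iii). Condition (F1) is literally hypothesis (iii), so nothing needs to be done there. The substantive step is to produce a good choice of the threshold sequence $\alpha_n$ and then check both parts of (G1).

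First I would apply Lemma \ref{l:G12}: since $F$ is $M$-Lipschitz under switchings, the lemma furnishes a uniform bound $\sup_{n\geq 1} C_n \leq M^4$. In particular $C_n = O(1)$, which removes one of the three moving parts in (G1) and leaves us only to control the interaction between $d^n_{\max}$, $\alpha_n$, and the tail $\P(|C^n_{\max}| > \alpha_n)$.

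Next I would make the natural choice $\alpha_n := A n^{1/(\gamma - 1)}$, with $A$ and $\gamma$ as in hypothesis (ii). Using (i), $(d^n_{\max})^2 = O(n^{2\beta})$, and the boundedness of $C_n$, the first part of (G1) becomes
\[
C_n (d^n_{\max})^2 \alpha_n = O\!\left(n^{\,2\beta + 1/(\gamma - 1)}\right),
\]
which is $o(n^{2\kappa})$ exactly when $2\beta + 1/(\gamma - 1) < 2\kappa$, i.e.\ $\gamma > 1 + (2\kappa - 2\beta)^{-1}$; this is precisely the assumption on $\gamma$ in (ii). For the second part of (G1), the same bounds on $C_n$ and $(d^n_{\max})^2$ combined with \eqref{conf:comp} give
\[
C_n (d^n_{\max})^2 \P\!\left(|C^n_{\max}| > \alpha_n\right) = O(n^{2\beta}) \cdot o(n^{2\kappa - 2\beta - 1}) = o(n^{2\kappa - 1}),
\]
again matching (G1) exactly.

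With (G1) and (F1) both in hand, Theorem \ref{thm:main} applies and yields the desired convergence. There is no genuine obstacle here: the corollary is essentially a packaging result, and the only thing that required care was aligning the exponents so that the threshold $n^{1/(\gamma-1)}$ from the component-tail bound is compatible with the $n^{2\kappa}$ and $n^{2\kappa-1}$ targets in (G1) after accounting for the $(d^n_{\max})^2$ factor. The hypotheses (i) and (ii) are calibrated to make this alignment tight, so the computation is short once Lemma \ref{l:G12} is invoked.
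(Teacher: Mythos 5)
Your proof is correct and is exactly the intended route: the paper itself states that Corollary~\ref{prop:example1} is an easy consequence of Lemma~\ref{l:G12} together with Theorem~\ref{thm:main}, and your verification — bounding $C_n$ by $M^4$ via Lemma~\ref{l:G12}, choosing $\alpha_n = A n^{1/(\gamma-1)}$, and matching exponents against (G1) and (F1) — is precisely the packaging argument the authors have in mind. The only (harmless) omission is a remark that $\alpha_n \geq 1$ can be arranged by enlarging $A$ or discarding finitely many $n$, and that (ii) forces $\beta<\kappa$ strictly, both of which are implicit in your exponent calculation.
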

 \begin{corollary}
\label{cor:intro}
Let $\{d^n_i\}_{i=1}^n$ be a bounded degree sequence and let $G_n := G(n,\{d^n_i\}_{i=1}^n)$ be the corresponding configuration model. Let $F$ be invariant under graph isomorphisms, additive and  $M$-Lipschitz under switchings. Let the $D_n$ be the degree of a randomly chosen vertex in $G(n, \{d_i^n\}_{i=1}^n)$. Suppose that 
\begin{enumerate}
\item[(i)] as $n \rightarrow \infty$,  $$ D_n \stackrel{d}{\rightarrow} D, \E[D_n] \rightarrow \E[D] \in (0, \infty), \frac{\E[D_n(D_n-1)]}{\E[D_n]} \rightarrow \frac{\E[D(D-1)]}{\E[D]} \in [0,1) .$$
\item[(ii)] $\V{F(G_n)} = \Omega(n)$
\end{enumerate}  
then
$$\frac{F_n - \E(F_n)}{\sqrt{\V{F_n}}} \cid  Z,$$
with $Z$ being a standard Normal random variable.
\end{corollary}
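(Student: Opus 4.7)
The plan is to derive this corollary as a direct specialization of Corollary~\ref{prop:example1}, using Lemma~\ref{l:G1} to verify the component-size decay hypothesis. Since the degree sequence is bounded, I would take $\beta = 0$ in Corollary~\ref{prop:example1}, so that the growth hypothesis $d^n_{max}=O(n^\beta)$ is immediate. The hypothesis $\V{F_n} = \Omega(n)$ matches the required $\V{F_n} = \Omega(n^{1/2 + \kappa})$ precisely by choosing $\kappa = 1/2$, and these choices clearly satisfy the constraint $0 \leq \beta \leq \kappa$.

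With these parameters fixed, the component-size condition of Corollary~\ref{prop:example1} reads
\[
\P\!\left(|C^n_{\max}| > An^{1/(\gamma-1)}\right) = o(n^{2\kappa - 2\beta - 1}) = o(1),
\]
required for some $\gamma > (2\kappa - 2\beta)^{-1} + 1 = 2$. To verify this, I would apply Lemma~\ref{l:G1}. Since the degree sequence is bounded, $D_n$ is uniformly bounded in $n$, so the tail estimate $\P(D_n \geq k) = O(k^{1-\gamma})$ holds trivially (the left-hand side vanishes for $k$ large enough) for \emph{every} $\gamma$; in particular one may pick any $\gamma > 3$, say $\gamma = 4$, as demanded by Lemma~\ref{l:G1}. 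The remaining convergence assumptions on $D_n$ needed by Lemma~\ref{l:G1} are exactly those postulated in condition (i) of the corollary.

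Applying Lemma~\ref{l:G1} with this choice of $\gamma$ then produces a constant $A$ such that $\P(|C^n_{\max}| \geq An^{1/(\gamma-1)}) \to 0$, which is the required $o(1)$ estimate. All three hypotheses of Corollary~\ref{prop:example1} are thereby verified, and its conclusion is exactly the desired central limit theorem.

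There is essentially no obstacle beyond being careful about the parameter ranges: the bounded-degree hypothesis simultaneously forces $\beta = 0$ and renders Lemma~\ref{l:G1}'s tail hypothesis vacuous, so that the only genuinely nontrivial input — the variance lower bound — is supplied directly by hypothesis (ii). The proof therefore reduces to bookkeeping about exponents, and can be stated in just a few lines.
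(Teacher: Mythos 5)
Your proposal is correct and follows essentially the same route as the paper: the paper's own proof also invokes Lemma~\ref{l:G1} (with any $\gamma>3$, which is available since boundedness makes the tail hypothesis vacuous) to obtain the component-size decay \eqref{conf:comp} with $\kappa=1/2$, and then reduces to the variance hypothesis~(ii); your choice $\beta=0$ and the resulting parameter bookkeeping are exactly what the paper does, just spelled out in more detail via Corollary~\ref{prop:example1}.
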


 \begin{proof}
 If $\{d_i^n\}_{i=1}^n$ satisfies the assumptions of Lemma \ref{l:G1} and $\sup_{n \geq 1} d^n_{max} < \infty$, then Lemma \ref{l:G1} implies that (\ref{conf:comp}) holds with $\kappa = \frac{1}{2}$ and for any $\gamma >3$. Hence (G1) holds with $\kappa = \frac{1}{2}$ and $\alpha_n = A n^{\frac{1}{\gamma-1}}$ for any $\gamma >3$. In other words, central limit theorem for Lipschitz functionals of bounded degree graphs follows if we show that $\V{F_n} = \Omega(n)$. This completes the proof.
   \end{proof}

 \begin{remark}
\label{rem:applns}

\begin{enumerate}
\item Let $T$ be a finite tree with atleast two vertices and $F_T(G)$ be the number of components of $G$ isomorphic to $T$. Note that from \cite[Theorem 3.2]{Janson2018}  assumption (ii) in Corollary \ref{cor:intro} holds for $F_T(G_n)$ whenever assumption (i) holds and $\mbox{deg}_T(v) \in \{k: P(D=k) >0\}$ for all $v \in T$ with $\mbox{deg}_T(v)$ being the degree of $v$ in $T$.

\item Suppose that $F$ is $M$-Lipschitz and the degree sequence satisfies the conditions in Lemma \ref{l:G1}. Then, from \eqref{e:varub1}, we obtain the bounds that 

 $$  \frac{3}{2 (\gamma -1)}  \leq \kappa \leq \frac{1}{2} + \frac{1}{\gamma-1}.$$ 

Note that such bounds are possible when $\gamma > 2$.	

\item Let us assume again that $F$ is a Lipschitz function under switchings.  Suppose the assumptions of Lemma \ref{l:G1} hold with some $\gamma > 3$. Then we have that $d^n_{max} = O(n^{\frac{1}{\gamma-1}})$ (see \cite[Section
  3.4]{Vander2018}). Thus, we get that  Lemma
\ref{l:G12} implies that (G1) holds with $\alpha_n = An^{\frac{1}{\gamma-1}}$ and for some $\kappa \leq 1/2 + 1/{(\gamma-1)}$ provided we have that

\[  \P( \mid C^{n}_{\max} \mid > An^{1/{(\gamma-1)}}) = o(n^{2\kappa - 2/(\gamma-1) - 1}) \] 

The upper bound on $\kappa$ is justified due to the above remark. Thus, CLT for Lipschitz functionals of such graphs hold if $\V{F_n} = \Omega(n^{1/2+\kappa})$ with $\kappa$ as above.

\item Suppose the assumptions of Corollary \ref{prop:example1} or
  Corollary \ref{cor:intro} hold. Let $G_n^\prime \stackrel{d}{=} G_n
  \mid G_n \mbox{ is a simple graph}$ i.e., $G_n^\prime$ is the configuration model conditioned to be simple. It is a well known fact that
  $G_n^\prime$ is a random simple graph with the uniform distribution
  over all graphs with the given degree sequence (see \cite[Proposition 7.15]{Vander2016}). If $F_n^\prime =
  F(G_n^\prime)$ then we have that
  $$\frac{F_n^\prime - \E(F_n)}{\sqrt{\V{F_n}}} \cid  Z,$$
  with $Z$ being a standard Normal random variable.  The result
  follows from \cite[Corollary 2.3 and Theorem 3.2]{Janson2019}. The
  proof is an imitation of the proof given in Example 8.3 of
  \cite{Janson2019}. The key change in the proof is to replace the upper bound in \cite[(8.7)]{Janson2019} with $MS$ instead of $2S$.
\end{enumerate}

\end{remark}

 An important example of a non-Lipschitz additive statistics for a
 random graph is {\em Susceptibility}. Namely, for a graph $G$, with $n$ vertices and  $K$ connected components, define for $p \geq 0$
 $$ S_p(G) = \sum_{i=1}^K (\mbox{Size of $i$-th component})^p.$$ Note
 that $S_0(G)$ is the number of connected components, $S_1(G) = n$,
 $S_2(G)$ is called Susceptibility and $S_p(G)$ is not in general
 Lipschitz for $p \geq 2$. As mentioned before strong law for
 $S_2(G(n,\{d^n_i\}_{i=1}^n))$ (with $G(n,\{d^n_i\}_{i=1}^n)$ being
 the configuration model) has been shown in \cite{Janson2010} and a
 central limit theorem for subcritical Erd\"{o}s-R\'enyi graphs has
 been shown in \cite{Janson2008jmp}. We now present a corollary that
 provides assumptions under which a central limit theorem holds for
 $S_p(G(n,\{d^n_i\}_{i=1}^n))$ when $p \geq 2$.

 \begin{corollary}
 \label{cor:suscep}
   Let $p \geq 2$. Let $\{d_i^n\}_{i=1}^n$ satisfy assumptions of Lemma \ref{l:G1} and  $G_n := G(n,\{d^n_i\}_{i=1}^n)$ be the corresponding configuration model. Assume that 
 \begin{enumerate}
 \item[(i)] $\gamma > 4p+4$.
 \item[(ii)]$ \P( \mid C^{n}_{\max} \mid > An^{1/{(\gamma-1)}}) = o(n^{-\frac{a}{\gamma- 1}})$ for some $A >0$ and $a > (4p-1)(\gamma-1) + 3$.
\item[(iii)]  $\V{S_p(G_n)} = \Omega(n)$ 
 \end{enumerate} 
Then, 
$$\frac{S_p(G_n) - \E(S_p(G_n))}{\sqrt{\V{S_p(G_n)}}} \cid  Z,$$
with $Z$ being a standard Normal random variable.
 \end{corollary}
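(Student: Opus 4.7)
The plan is to apply Theorem~\ref{thm:main} with $\kappa = 1/2$ and $\alpha_n = An^{1/(\gamma-1)}$. Hypothesis~(iii) is then precisely (F1), so the task reduces to verifying~(G1). Since $S_p$ is not globally Lipschitz under edge-addition for $p \geq 2$, Lemma~\ref{l:G12} does not apply directly; instead I would exploit the fact that $S_p$ is Lipschitz under switchings with a \emph{random} constant of order $|C^n_{\max}|^p$, and combine this with the component tail bound in~(ii).

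First I would establish the following switching estimate. If $m \sim m'$ differ by a single switching, then at most four components of $G(m)$ are affected and their vertex sets are redistributed into at most four new components of $G(m')$; all of these components have size bounded by four times the largest component size of $G(m)$ or $G(m')$. Writing $L(G)$ for the largest component size of $G$, one obtains
\begin{equation*}
|S_p(G(m)) - S_p(G(m'))| \;\leq\; C_p \bigl(L(G(m))^p + L(G(m'))^p\bigr)
\end{equation*}
for a constant $C_p$ depending only on $p$. Tracking this random Lipschitz constant through the switching coupling that proves Lemma~\ref{l:G12} (see \cite[Theorem~2.19]{Wormald1999}), and using that both coupled matchings have the marginal law of $G_n$, yields
\begin{equation*}
|\Delta_{k,n}| \;\leq\; C_p\, \E\bigl[|C^n_{\max}|^p \bigm| \F_{k,n}\bigr] \quad \text{a.s.}
\end{equation*}
Jensen's inequality and the tower property then give $C_n \leq C_p^4\,\E\bigl[|C^n_{\max}|^{4p}\bigr]$.

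Next I would bound this fourth moment using~(ii) together with the deterministic bound $|C^n_{\max}| \leq n$. Splitting on the event $\{|C^n_{\max}| \leq An^{1/(\gamma-1)}\}$ gives
\begin{equation*}
\E\bigl[|C^n_{\max}|^{4p}\bigr] \;=\; O\bigl(n^{4p/(\gamma-1)}\bigr) + o\bigl(n^{4p - a/(\gamma-1)}\bigr).
\end{equation*}
Since $\{d_i^n\}$ satisfies the hypotheses of Lemma~\ref{l:G1}, Remark~\ref{rem:applns}(3) gives $d^n_{max} = O(n^{1/(\gamma-1)})$, whence $(d^n_{max})^2\alpha_n = O(n^{3/(\gamma-1)})$. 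The first inequality in (G1) with $\kappa=1/2$ becomes $C_n \cdot n^{3/(\gamma-1)} = o(n)$; comparing the two terms in $C_n$ against $o(n^{1-3/(\gamma-1)})$ yields the conditions $\gamma > 4p+4$ and $a > (4p-1)(\gamma-1) + 3$, which are precisely hypotheses~(i) and~(ii). An analogous (and strictly weaker) check handles the second inequality in~(G1). Theorem~\ref{thm:main} then delivers the CLT.

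The principal obstacle is the first step: converting a random switching Lipschitz bound into an almost-sure bound on $|\Delta_{k,n}|$. This requires inspecting the switching coupling in Wormald's proof to check that it remains valid when the Lipschitz constant is data-dependent; the argument goes through because both matchings produced by the coupling have the marginal distribution of $G_n$, so either $L(G(m))^p$ or $L(G(m'))^p$ contributes an equal $\E[|C^n_{\max}|^p \mid \F_{k,n}]$ factor on the right.
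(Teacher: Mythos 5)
Your proof is correct and follows essentially the same route as the paper's: bound $C_n \leq c_p\,\E[|C^n_{\max}|^{4p}]$, estimate this moment via hypothesis (ii) together with the deterministic bound $|C^n_{\max}|\leq n$, and then check both halves of (G1) with $\kappa=1/2$ and $\alpha_n = An^{1/(\gamma-1)}$, which produces exactly hypotheses (i) and (ii). The paper asserts the inequality $C_n \leq c_p\,\E[|C^n_{\max}|^{4p}]$ without elaboration, and your random-Lipschitz-constant argument via the switching coupling is a sound way to justify it (though, as stated, the bound on $|\Delta_{k,n}|$ should involve both $\E[|C^n_{\max}|^p\mid\F_{k,n}]$ and $\E[|C^n_{\max}|^p\mid\F_{k-1,n}]$; this does not affect the final $C_n$ bound after Jensen and the tower property).
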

 \begin{proof} It is easy to see using (ii) and definition of $C_n$ for Theorem \ref{thm:main} that 
   \begin{equation*}C_n \leq c_P E([C^n_{\mbox{max}}]^{4p})   \leq c_p [ n^{\frac{4p}{(\gamma-1)}} +  n^{4p-\frac{a}{(\gamma- 1)}}  ]  \leq c_p [ n^{\frac{4p}{(\gamma-1)}} +  n^{\frac{4p(\gamma -1) -a}{(\gamma- 1)}}] \end{equation*}
   Note that $d^n_{\mbox{max}} = O(n^{\frac{1}{\gamma -1}})$ and with $\alpha_n = A n^{\frac{1}{\gamma -1}}.$ We have
   $$  C_n(d^n_{\mbox{max}})^2 \alpha_n \leq  c_2 [ n^{\frac{4p+3}{\gamma-1}} + n^{\frac{4p(\gamma-1)+ 3 -a}{(\gamma- 1)}}] =o(n)$$
   and
   $$C_n(d^n_{max})^2\P( \mid C^{n}_{\max} \mid > \alpha_n) \leq  c_3 [ n^{\frac{4p+2-a}{(\gamma-1)}} +  n^{\frac{4p(\gamma - 1) + 2 -2a}{(\gamma- 1)}}] = o(1).$$
 Hence the conditions of Theorem \ref{thm:main} hold with $\kappa = 1/2$ and the normal convergence follows. 

   \end{proof}

\section{Proof of the main result}
\label{sec:proofs}

{\em Proof of Theorem \ref{thm:main} :} Let $X_{1,n},\ldots,X_{m_n,n}$
be the sequence of matchings of half-edges generated by the
edge-exploration process as defined in Section
\ref{sec:modeldef}. For the exploration process at step $t$, define the unexplored vertex set as $\mathcal{U}_t = \{ k \in [n] : W_k \cap U_t \neq \emptyset.\}$.  Denote the explored vertex-set as $\mathcal{E}_t := [n] \setminus \mathcal{U}_t$. Observe that the sequence $X_{1,n},\ldots,X_{m_n,n}$ is not identically distributed but have the following independence property that will be used crucially by us. 

\medskip
{\bf Independence property : } {\it Conditioned on $A_t = 0$, we have that $G(n,\{d^n_i\}_{i=1}^n)$ is a disjoint union of 
$G^1_n := G(|\mathcal{U}_t|,\{d^n_i\}_{i \in \mathcal{U}_t})$ and $G^2_n := G(|\mathcal{E}_t|,\{d^n_i\}_{i \in \mathcal{E}_t})$ and $G^1_n$ and $G^2_n$ are independent.}

We refer the reader to \cite[Lemma 3.3]{Barbour2017} for  a proof of the above. In other words, when there are no active half-edges, the configuration
model becomes a union of two independent configurations models - one
on the connected vertex set and the other on unexplored vertex set.

Clearly, $F_n \in \sigma \{ X_{1,n}, \ldots,
X_{m_n,n}\}$.  For $1 \leq k \leq m_n, n \geq 1$ let ${\mathcal F}_{k,n}
= \sigma \{ X_{1,n}, X_{2,n} \ldots X_{k-1,n}, X_{k,n}\}$ and set
$\mathcal{F}_{0.n} = \emptyset$. Observe that
$$ F_n - \E(F_n)  = \sum_{k=1}^{m_n} \E(F_n \mid {\mathcal F}_{k,n}) - \E(F_n \mid {\mathcal F}_{k-1,n}).$$
and $\Delta_{k,n} = \E(F_n \mid {\mathcal F}_{k,n}) - \E(F_n \mid {\mathcal F}_{k-1,n})$ is a martingale difference sequence. Thus to prove the central limit theorem, we shall verify the conditions of the central limit theorem for martingale difference arrays due to McLeish. Namely, if $D_{k,n} = \frac{\Delta_{k,n}}{\sqrt{\V{F_n}}}$ and
\begin{eqnarray}
  && \sup_{n \geq 1}\EXP{\max_{k \leq m_n}|D_{k,n}|^2)} < \infty  \label{mc1}\\
  && \max_{k \leq m_n}D_{k,n} \cip 0,\label{mc2}\\
  &&    \sum_{k=1}^nD^2_{k,n} \cip 1  \label{mc3} 
\end{eqnarray}
then it follows from  \cite[Theorem 2.3] {McLeish1974} that \begin{equation} \label{mc4}
  \sum_{k=1}^{m_n}D_{k,n} \cid  Z\end{equation}
  with $Z$ being a standard Normal random variable. As  $ \frac{F_n - \E(F_n)}{\sqrt{\V{F_n}}}= \sum_{k=1}^nD_{k,n}$ we would  have the result. To complete the proof we will verify (\ref{mc1}), (\ref{mc2}), and (\ref{mc3}).

{\bf Verifying (\ref{mc1}):} By orthogonality of martingale differences, we have that
\begin{equation}
\label{e:var}
 \V{F_n} = \sum_{k=1}^{m_n} \E(\Delta_{k,n}^2)
\end{equation}
and this implies $$ \sup_{n \geq 1}\EXP{\max_{k \leq m_n}|D_{k,n}|^2)}
\leq \sup_{n \geq 1}\sum_{k \leq m_n} \EXP{|D_{k,n}|^2)}= \sup_{n \geq
  1}\sum_{k \leq m_n} \frac{\EXP{\Delta^2_{k,n}}}{\V{F_n}} = 1.$$

{\bf Verifying (\ref{mc2}):} By the trivial bound that $m_n \leq nd^n_{max}$, we have that for any $\epsilon > 0$,
\[ \P(\max_{k \leq m_n}|\Delta_{k,n}| \geq \epsilon \sqrt{\V{F_n}}) \leq \sum_{k=1}^{m_n}\frac{\epsilon^{-4}\EXP{|\Delta_{k,n}|^4}}{{\V{F_n}}^{2}} \leq C_nm_nn^{-1-2\kappa}\epsilon^{-4} \leq C_nd^n_{max}n^{-2\kappa}\epsilon^{-4}.\]
By (G1) and the fact that $\alpha_n \geq 1$ the above implies that $$\max_{k \leq m_n}|D_{k,n}| \cip 0. $$

{\bf Verifying (\ref{mc3}):} We are left to verify is the convergence
in probability of squared martingale differences.
For $1 \leq k \leq m_n$, define $$E_{n,k}  = \{A_t = \emptyset \mbox{ for some } t \in [k-2d_{\mbox{max}}\alpha_{n}, k] \}$$
$$W_{k,n} = D^2_{k,n}\1_{E_{n,k}} \mbox{ and }  Z_{k,n} = D^2_{k,n} - W_{k,n}.$$
By (\ref{e:var}), $$\sum_{k=1}^{m_n} \EXP{W_{k,n}} +  \sum_{k=1}^{m_n} \EXP{Z_{k,n}} = 1 .$$
Using the above, the triangle inequality and non-negativity of $Z_{k,n}$'s, we have
\begin{eqnarray}\label{term0}
  \mid \sum_{k=1}^{m_n} D^2_{k,n} - 1 \mid & \leq &  \mid \sum_{k=1}^{m_n} W_{k,n} - 1 \mid  +  \mid \sum_{k=1}^{m_n} Z_{k,n} \mid \no\\
  &\leq & \mid \sum_{k=1}^{m_n} W_{k,n} - \sum_{k=1}^{m_n} \EXP{W_{k,n}}  \mid  +  \sum_{k=1}^{m_n} \EXP{Z_{k,n}}  + \sum_{k=1}^{m_n} Z_{k,n} \no\\
  &=& I + II +III.
\end{eqnarray}
We shall now show that each of the terms I and III goes to zero in mean. The latter fact will imply that II goes to zero. We begin with I. By Cauchy-Schwarz inequality,

\begin{eqnarray*}
[\EXP{I}]^2 & \leq &\EXP{I^2} = \V{ \sum_{k=1}^{m_n} W_{k,n}}\\
&=&  \sum_{k=1}^{m_n} \V{W_{k,n}} +  2 \sum_{k=1}^{m_n} \sum_{h=k+1}^{k+2d^n_{max}\alpha_n} \C{W_{k,n}, W_{h,n}} +   2 \sum_{k=1}^{m_n} \sum_{h=k+2d^n_{max}\alpha_n+1}^{m_n} \C{W_{k,n}, W_{h,n}}
\end{eqnarray*}
Now, by the independence property stated at the beginning of the proof and additivity of $F$, we have that $W_{k,n} \in \sigma \{X_{t,n}: t \in [k-2d^n_{\mbox{max}}\alpha_n,k] \}$. Thus, $W_{k,n}$ is independent of $W_{h,n}$ for all $h > k + 2d_{\mbox{max}}\alpha_n.$ This along with the Cauchy-Schwarz inequality will imply 
\begin{eqnarray}
  [\EXP{I}]^2 & \leq & \sum_{k=1}^{m_n} \V{W_{k,n}} +  2 \sum_{k=1}^{m_n} \sum_{h=k+1}^{k+2d^n_{max}\alpha_n} \C{W_{k,n}, W_{h,n}} + 0 \no \\
   & \leq & \sum_{k=1}^{m_n} \EXP{W^2_{k,n}}+  2 \sum_{k=1}^{m_n} \sum_{h=k+1}^{k+2d^n_{max}\alpha_n} \sqrt{\EXP{W_{k,n}^2}\EXP{W_{h,n}^2}} \no \\
   & \leq & \sum_{k=1}^{m_n} \frac{\EXP{\Delta^4_{k,n}}}{\V{F_n}^2} +  2 \sum_{k=1}^{m_n} \sum_{h=k+1}^{k+2d^n_{max}\alpha_n} \frac{\sqrt{\EXP{\Delta_{k,n}^4}\EXP{\Delta_{h,n}^4 }}}{\V{F_n}^2} \no \\
  & \leq & \frac{m_nC_n + 4C_nm_nd^n_{max}\alpha_n }{\V{F_n}^2}\no \\
  &\leq& \frac{nd^n_{\mbox{max}}C_n + 4n (d^n_{max})^2 C_n\alpha_n }{\V{F_n}^2} \no
\end{eqnarray}
Using the first assumption of (G1) and (F1), we have that
 \begin{equation} \label{termI}
\EXP{I} \rightarrow 0 \mbox{ as } n \rightarrow \infty.
\end{equation}
For term II, again by {Cauchy-Schwarz} inequality
\begin{eqnarray*}
  \EXP{II}&=& \EXP{\sum_{k=1}^{m_n} Z_{k,n}}= \sum_{k=1}^{m_n} \EXP{ \frac{\Delta^2_{k,n} \1_{(E_{n,k})^c}}{\V{F_n}}}\\
  &\leq & \sqrt{\frac{m^2_n C_n \P((E_{n,k})^c)}{\V{F_n}^2}}\\
    &\leq & \sqrt{\frac{n^2 (d^n_{\mbox{max}})^2 C_n \P(C^{\mbox{max}}_n \geq \alpha_n)}{\V{F_n}^2}}
  \end{eqnarray*}
Therefore, by the latter assumption of (G1) and (F1) we have
\begin{equation} \label{termII}
\EXP{II} \rightarrow 0 \mbox{ as } n \rightarrow \infty.
\end{equation}
As noted earlier this implies that 
\begin{equation} \label{termIII}
III \rightarrow 0 \mbox{ as } n \rightarrow \infty.
\end{equation}

By (\ref{term0}, \ref{termI}, \ref{termII}, \ref{termIII}) we have that
as $n \rightarrow \infty$, $$  \sum_{k=1}^n D^2_{k,n} \stackrel{L_1}{\rightarrow} 1.$$ 
This completes the proof.
\qed

We conclude with the following remark on the possible class of models for which Theorem \ref{thm:main} holds.

\begin{remark} \label{rthm:main}
As seen from the proof of Theorem \ref{thm:main} the key tool was the
martingale central limit theorem. For this  we used two key properties from the construction of the model:
\begin{enumerate}
  \item[(a)] The filtration ${\mathcal F}_{k,n} = \sigma \{ X_{1,n},  X_{2,n} \ldots X_{k-1,n}, X_{k,n}\}$ generated by the sequence of
    matchings $X_{1,n},\ldots,X_{m_n,n}$ has appropriate `independence' property as stated in the beginning of the proof.
    
  \item[(b)] The statistic is additive and the graph is subcritical to ensure `fast enough decoupling' of the martingale-difference array sequence induced by the above filtration and the statistic.  
\end{enumerate}
 Thus if a random graph model can be constructed using matchings that satisfy (a) and (b) above then for any additive statistic satisfying (F1) and (G1) we can prove a central limit theorem
 \end{remark}

\section*{Acknowledgments}
The authors are thankful to  Andrew Barbour and Adrian R{\"o}llin for sharing an earlier draft of their pre-print.

\noindent  {\bf Siva Athreya}\\
8th Mile Mysore Road, Indian Statistical Institute,
         Bangalore 560059, India.\\
         Email: \texttt{athreya@isibang.ac.in}

         \noindent  {\bf D. Yogeshwaran}\\
8th Mile Mysore Road, Indian Statistical Institute,
         Bangalore 560059, India.\\
         Email: \texttt{d.yogesh@isibang.ac.in}

\end{document}